\newtheorem{theorem}{Theorem}
\newtheorem{proposition}[theorem]{Proposition}
\theoremstyle{remark}
\newtheorem{remark}[theorem]{Remark}
\begin{document}

\title[Embedding algorithms and applications to differential equations]{Embedding 
algorithms and applications to differential equations}

\author[S. Ali]{Sajid Ali}

\address{Department of Basic Sciences, School of Electrical Engineering and Computer
Science, National University of Sciences and Technology, Islamabad 44000, Pakistan}

\email{sajid$\_$ali@mail.com}

\author[H. Azad]{Hassan Azad}

\address{Department of Mathematics and Statistics, King Fahd University,
Saudi Arabia}

\email{hassanaz@kfupm.edu.sa}

\author[I. Biswas]{Indranil Biswas}

\address{School of Mathematics, Tata Institute of Fundamental Research, Homi
Bhabha Road, Mumbai 400005, India}

\email{indranil@math.tifr.res.in}

\author[R. Ghanam]{Ryad Ghanam}

\address{Virginia Commonwealth University in Qatar, Education City
Doha, Qatar}

\email{raghanam@vcu.edu}

\author[M.T. Mustafa]{M. T. Mustafa}

\address{Department of Mathematics, Statistics and Physics, Qatar
University, Doha, 2713, State of Qatar}

\email{tahir.mustafa@qu.edu.qa}

\subjclass[2010]{17B45, 17B30, 17B81, 34L99.}

\keywords{Maximal solvable subalgebras; algebraic Lie algebras; invariant solutions.}

\date{}

\begin{abstract}
Algorithms for embedding certain types of nilpotent subalgebras in maximal subalgebras 
of the same type are developed, using methods of real algebraic groups. These 
algorithms are applied to determine non-conjugate subalgebras of the symmetry algebra 
of the wave equation, which in turn are used to determine a large class of invariant 
solutions of the wave equation. The algorithms are also illustrated for the symmetry 
algebra of a classical system of differential equations considered by Cartan in the 
context of contact geometry.
\end{abstract}

\maketitle

\section{Introduction}

One of the main applications of Lie algebras is to find solutions of differential 
equations, by reduction of order, or by using conjugacy classes of its subalgebras to 
find invariant solutions. The method of invariant solutions goes back to
\cite[Ch.~X]{Lie}. 
This method is also explained in detail in the books of Ibragimov \cite[Ch~9]{ib}, 
Ibragimov \cite{ibsw}, Bluman \cite{bluman1, bluman2} and Olver \cite[Ch~3]{Ol}.

The Lie theoretic input in this method is a list of conjugacy classes of subalgebras 
of dimension depending on the order of the equation. A detailed structure of the 
symmetry algebra is also useful in finding linearizing coordinates for linearizable 
equations.

It is our experience, based on \cite{ADGM}, that if the algebras are not chosen
appropriately, 
they are practically useless, because a preliminary step is to find their invariants 
and there is no algorithmic procedure to do that. However, if the subalgebras are 
constructed from the geometry of the space on which one is studying a given equation, 
for example by embedding translations or scalings in maximal subalgebras, the 
characteristics of the subalgebras obtained are manageable.

The principal aim of this note is to give algorithms for embedding given abelian and 
solvable algebras of certain types in maximal subalgebras of the same type, using 
standard commands of Maple.

The precise types of the subalgebras are given in the algorithms constructed below.

Maple is able to find the Cartan decomposition as well as root space decompositions 
for semisimple algebras of fairly high dimensions. The algorithms it uses are based on 
the fundamental papers of Rand, Winternitz and Zassenhaus \cite{RWZ}, of de Graff 
\cite{Dg}, and of Dietrich, Faccin and de Graaf \cite{DFG}
and Ian	Anderson \cite{An}. The recent book of 
\v{S}nobl-Winternitz \cite{SW} gives a detailed account of some of these algorithms.

Derksen, Jeandel and Koiran \cite{DJK} have also developed algorithms for
computing the Zariski closures of linear solvable Lie groups and the
algorithms in this paper reduce the computation of Zariski closures of
linear groups to those of abelian subgroups.
The algorithms given in this paper are based on results of Mostow \cite{Mo}
on real algebraic groups; see a recent account of the subject in \cite{AB}.
All the algebras considered in this paper are assumed to be real
algebraic Lie algebras. We recall their definition and some basic facts
about them.
\begin{enumerate}
\item Let ${\mathfrak g}\, \subset\, {\rm gl}(n, {\mathbb R})$ be a Lie algebra, with
$G$ the corresponding Lie
group. The algebra ${\mathfrak g}$ is called an algebraic Lie algebra if the
group $G^{\mathbb C}\,\subset\, {\rm GL}(n, {\mathbb C})$ with Lie algebra
${\mathfrak g}^{\mathbb C}\,=\, {\mathfrak g} \oplus \sqrt{-1}{\mathfrak g}$ is an
algebraic group in as defined in \cite{Bo}.

\item{} The group $G^{\mathbb C}$ is generated by complex $1$-parameter subgroups
$$\{\exp (zX) \,\mid\, z\,\in\, {\mathbb C}\},\ \ X \,\in\, {\mathfrak g}$$
and the connected component of the real points of $G^{\mathbb C}$ is the group $G$.
\end{enumerate}

All real semisimple Lie algebras, all real linear Lie algebras generated
by nilpotents, all abelian real linear algebras of semisimple elements
defined by integral equations as well as all subalgebras generated by
the types of algebras already listed are examples of algebraic Lie
algebras.

If an abstract Lie algebra is given by its commutator table, then one is
tacitly working in its adjoint representation and all the concepts
regarding semisimplicity, nilpotency etc. are with reference to this
representation.

As regards the reduced root space decompositions, Maple needs a
maximal split abelian algebra of semisimple elements. If that is not
specified, the commands will give, in general, roots taking complex
eigen-values.

The algorithms constructed in this paper give both the relative and absolute
root system of the Lie algebra more or less automatically.

We will illustrate the algorithms by working out in detail an embedding of a 
subalgebra, which is clearly abelian, of the symmetry algebra of the wave equation on 
flat 4 dimensional space, in a maximal solvable subalgebra: this gives at the same 
time detailed structure of the symmetry algebra and several non-conjugate subalgebras.

These algebras are non-conjugate by construction. A solvable linear algebraic 
algebra is itself a sum of a torus and its maximal nilpotent ideal. A linear 
algebraic torus has either all real eigen-values or all purely imaginary eigen-values 
or it is itself a sum of two such tori \cite{Bo}, \cite[Proposition~1]{AB}. 
Thus if given subalgebras of a semisimple algebra are abstractly isomorphic and 
solvable one looks at their semisimple and nilpotent parts in the adjoint 
representation of any semisimple subalgebra containing them to decide if they are 
non-conjugate. The uniqueness of the Jordan decomposition \cite{Bo} ensures 
that it is immaterial which ambient subalgebra one chooses as long as it is 
semisimple.

If the subalgebras are semisimple and abstractly isomorphic, one looks 
at their centralizers or normalizers to decide if they are non-conjugate.

It may happen that their centralizers or normalizers are the same. In that case 
one has to use Bruhat decomposition or its variants \cite{Kn} or methods similar 
to \cite{GW}, \cite{PWZ} to decide non-conjugacy under the adjoint group.
This type of complete symmetry analysis has not been done in this paper and the 
examples were chosen to illustrate that while doing the algorithms interactively 
and computing normalizers or centralizers one gets several subalgebras for which 
one could check their non-conjugacy without using Bruhat or Iwasawa decomposition.

We do the same for the symmetry algebra of a classical nonlinear ODE, whose 
symmetry algebra was determined and identified as the exceptional algebra $G_2$ by 
Cartan --- see \cite{Agricola} for further references, and also \cite{AKO}
and \cite{Ke}. The 
group $G_2$ also has a very interesting relation to mechanics. A recent account is 
in Bor and Montgomery \cite{BM}. One of the main points in \cite{BM} is to 
identify a maximal compact subalgebra of $G_2$ and give its explicit 
decomposition. The identification and decomposition of the maximal compact 
subalgebra of $G_2$ follows from algorithm 1 in a straight forward way.

The structural information obtained in Section 4 gives several three and four dimensional 
subalgebras that are non-conjugate in the adjoint representation. This gives a more extensive and 
useable list of subalgebras than that given in \cite{ADGM} and these subalgebras are used in the 
last section to give solutions of the wave equation in flat 4-d space.

In a follow up of this paper, a similar analysis of the wave equation on all static spherically symmetric spaces times will be given to obtain a more extensive list of solutions than that given in \cite{ADGM} of the wave equation on certain spherically symmetric four dimensional spaces.
In particular, such invariant solutions will be given for all types of $3$ dimensional subalgebras that can arise as subalgebras of the symmetry algebra of the equation.

The reader is referred to \cite{ibsw} for very general results on the wave equation on Riemannian manifolds and several other equations of relevance to physics.

As far as Lie algebras are concerned, we need the following results:
\begin{enumerate}
\item If $X$ is an element of a Lie algebra $L$ which is the Lie algebra of a real 
algebraic subgroup of ${\rm GL}(n,{\mathbb R})$, and $X\,=\, X_s+X_n$ is the 
Jordan decomposition of $X$, then both $X_s$ and $X_n$ are in $L$ \cite[p.~14, 
Section~3.7, Proposition~1, Lemma~1]{Bo}, \cite{AB}. In fact, they are in the center of the 
centralizer of $X$ in $L$.

\item If $S$ is an abelian subalgebra consisting of semisimple elements of a real 
algebraic Lie algebra, then its centralizer $Z(S)$ has the Levi decomposition
$$
Z(S) \,=\, [Z(S),\, Z(S)] \oplus Z(Z(S))\, ,
$$
where $Z(Z(S))$ is the center of $Z(S)$ \cite{BT} (a proof of this is also given below
in Proposition \ref{propL}).

\item The derived algebra of any solvable algebra is ad-nilpotent. In particular, 
if $H$ is any subalgebra of $L$ then the derived algebras of the radical of 
centralizer of $H$ and of the normalizer of $H$ are ad-nilpotent
\cite[p.~105, Theorem~5.4.7]{HN}, \cite[Proposition~1]{AB}.

\item If $H$ is a semisimple subalgebra of $L$ and $X$ is an ad-semisimple or ad-nilpotent 
element of $H$ in the adjoint representation of $H$ on itself, then $X$ is also 
ad-semisimple or ad-nilpotent in the adjoint representation of $L$ \cite[p.~14,
3.7]{Bo}.
\end{enumerate}

These facts are very useful in verifying that a certain element is semisimple or 
nilpotent by reducing the computations to subalgebras of small dimensions.

\section{Roots}\label{sR}

\subsection{Roots of a semisimple algebra}

A few words regarding the section on roots are in order. Maple will give -- for any 
Cartan algebra -- an array of complex numbers. In the following section we explain 
how to extract a simple system of roots and the corresponding Dynkin diagram 
directly from such a list.

Let $L$ be a semisimple Lie subalgebra of $\mathfrak{g}\mathfrak{l}(n,
{\mathbb R})$ and $C$ a Cartan subalgebra of $L$. The algebra $C$
is, by definition, a maximal abelian subalgebra of diagonalizable
elements in the complexification of $L$. A nonzero vector $v$ in
${L}\oplus\sqrt{-1}L$ such that
$$
[h\, ,v] \,=\, \lambda(h)\cdot v
$$
for all $h\, \in\, C$ is called a \textit{root vector} and the corresponding linear
functional $\lambda$ is called a root of the Cartan algebra $C$.

In general, the roots will be complex valued, so one needs to define what it means
for a complex valued root to be positive -- based only on the list of roots provided
by the program. This is sufficient to describe the Dynkin diagram algorithmically, as
detailed below.

A complex number $z\,=\, a+\sqrt{-1}b$, where $a\, , b\, \in\, \mathbb R$, is positive
if either its real part $a$ is positive or $a\,=\, 0$ but $b\, >\,0$.

Fix a basis $h_1\, , \cdots\, , h_r$ of $C$. A non-zero root
$\lambda$ is positive if the first nonzero number $\lambda(h_i)$ is
a complex positive number. Otherwise, it is called a negative root.

Positive roots which are not a sum of two positive roots are called simple roots.

For sake of convenience, henceforth a root will mean a non-zero
root.

\subsection{Restricted roots}\label{secrr}

An abelian subalgebra of $L$ consisting of semisimple elements in the adjoint
representation on $L$, is,
by definition, a {\it torus}. If, moreover, all its elements in the adjoint representation of
$L$ have real eigen-values, then it is a {\it real} torus; if all eigen-values
are purely imaginary, it is called a {\it compact} torus.

Any real algebraic torus is a sum of a real and a compact torus and the dimensions 
its real and compact parts are invariants of the torus \cite[Proposition~1]{AB}.

Moreover, all maximal solvable subalgebras $B$ of a real semisimple algebra with real
eigen-values in the adjoint representation are conjugate \cite{Mo}, \cite{AB}.
In the context of the Iwasawa decomposition \cite{Kn}, \cite{HN}, this is the 
algebra $A\oplus N$.
	
If $A$ is a maximal torus of B then the full algebra is a sum of $A$-invariant 
subspaces --- of dimension possibly greater than one --- and the roots $A$ in $B$ which are 
not a sum of two roots in $B$ are simple roots of a root system --- in the sense of 
\cite{HN}, \cite{Kn}. In case that the real semisimple algebra has a maximal torus with all real 
eigen-values we can define positive roots without going to the complexification of 
the algebra. In this case the positive root spaces together with the torus give a maximal 
solvable algebra whose eigen-values are all real and all such algebras are conjugate. Therefore,
the restricted root system and the absolute root system coincide in this case.
	
For each positive simple root $\alpha$ we can find a standard set of generators $X_\alpha,\,
Y_\alpha,\, H_\alpha$ with $X_\alpha,\, Y_\alpha$ eigen-vectors of ${\rm ad}(H_\alpha)$ with
opposite and nonzero eigen-values. This three dimensional subalgebra is therefore isomorphic
to ${\rm sl}(2, {\mathbb R})$. If, for each simple root $\alpha$ we fix an isomorphism
$$\varphi_\alpha\, :\, {\rm sl}(2, {\mathbb R}) \,\longrightarrow\, 
\langle X_\alpha,\, Y_\alpha,\, H_\alpha\rangle\, ,
$$
where $\langle X_\alpha,\, Y_\alpha,\, H_\alpha\rangle$ is the Lie subalgebra generated
by $X_\alpha,\, Y_\alpha,\, H_\alpha$, then the elements
$$
\varphi_\alpha\begin{pmatrix}0 &1\\ -1 & 0\end{pmatrix}
$$
generate, as a Lie subalgebra, a maximal compact subalgebra of the given Lie 
algebra --- only in the case that the Lie algebra has a maximal torus whose 
eigen-values are all real \cite[p.~100, Lemma~43]{St}.

\subsection{Procedure for constructing Dynkin diagram}

Positive roots which are not a sum of two positive roots are called simple roots. 
Thus to obtain simple roots from a given set of positive roots, one adds pairs of 
positive roots and marks those that are sums of positive roots; at the end, one 
strikes out those roots that are sums of two positive roots and the remaining ones 
will be simple roots.

Let $a$, $b$ be simple roots. The positive roots among the integral combinations of 
them determine the bond between $a$ and $b$. The simple roots $a$, $b$ are not 
joined if $a+b$ is not a root. They are joined by a single bond if $a$, $b$ and 
$a+b$ are the only positive roots among the integral combinations of $a$ and $b$. 
They are joined by a double bond with arrow pointing from $a$ to $b$ if $a$, $b$, 
$a+b$ and $a+2b$ are the only positive roots among the integral combinations of $a$ 
and $b$. They are joined by a triple bond with arrow pointing from $a$ to $b$ if 
$a$, $b$, $a+b$, $a+2b$, $a+3b$ and $2a+3b$ are the only positive roots among the 
integral combinations of $a$ and $b$.

The diagram then identifies the complexification of the Lie algebra $L$.

\section{Algorithms}\label{aL}

We need the following result on centralizers of semisimple
elements to implement the algorithms.

\begin{proposition}\label{propL}
Let $S$ be a commuting algebra of diagonalizable
elements in a real semisimple algebra $L$. Then the centralizer 
of $S$ has the Levi decomposition
$$
Z_L(S)\,=\, [Z_L(S),\, Z_L(S)]\oplus Z(Z_L(S))\, ,
$$
where $Z_L(S)$ is the centralizer of $S$ in $L$, and
$Z(Z_L(S))\, \subset\, Z_L(S)$ is its center.
\end{proposition}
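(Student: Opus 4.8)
The plan is to show that the centralizer $Z_L(S)$ is a reductive subalgebra of $L$, since for a reductive Lie algebra the Levi decomposition takes the asserted form: the solvable radical coincides with the center and complements the semisimple derived algebra. So the whole proof reduces to establishing reductivity of $Z_L(S)$.

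\medskip

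First I would invoke the fact that $L$ is semisimple, hence carries a nondegenerate, invariant (associative) symmetric bilinear form $\kappa$, namely the Killing form, satisfying $\kappa([x,y],z)=\kappa(x,[y,z])$. Next I would use that $S$ consists of diagonalizable (ad-semisimple) elements: by item (4) in the list of facts recalled in the introduction, and by the hypothesis that $S$ is a commuting algebra of diagonalizable elements in $L$, the operators $\{\mathrm{ad}(s)\mid s\in S\}$ form a commuting family of simultaneously diagonalizable endomorphisms of $L$. This gives a weight-space (eigenspace) decomposition $L=\bigoplus_{\lambda}L_\lambda$, where $L_\lambda=\{x\in L\mid \mathrm{ad}(s)x=\lambda(s)x\ \text{for all}\ s\in S\}$, and by definition $Z_L(S)=L_0$ is the zero weight space. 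The key structural step is then to check that the Killing form restricts to a nondegenerate form on $L_0$: because $\kappa$ is invariant, $\kappa(L_\lambda,L_\mu)=0$ whenever $\lambda+\mu\neq 0$, so $L_0$ is orthogonal to every $L_\lambda$ with $\lambda\neq 0$; since $\kappa$ is nondegenerate on all of $L$, its restriction to $L_0=Z_L(S)$ must be nondegenerate as well.

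\medskip

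From nondegeneracy of the Killing form of $L$ on $Z_L(S)$, I would deduce reductivity. The standard argument is that the radical $R$ of $Z_L(S)$ lies in the orthogonal complement of $[Z_L(S),Z_L(S)]$ with respect to $\kappa$; combined with invariance of $\kappa$ and a Cartan-type criterion this forces $R$ to be central in $Z_L(S)$, so $Z_L(S)=\mathfrak{z}\oplus[Z_L(S),Z_L(S)]$ with $\mathfrak{z}=Z(Z_L(S))$ abelian and $[Z_L(S),Z_L(S)]$ semisimple. An equivalent and perhaps cleaner route, staying closer to the ambient structure, is to note that $Z_L(S)$ is the fixed-point subalgebra of the torus generated by $S$ acting by the (completely reducible, since ad-semisimple) adjoint action; a subalgebra on which the ambient Killing form is nondegenerate is automatically reductive. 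Either way the center $Z(Z_L(S))$ is exactly the solvable radical, yielding the displayed decomposition.

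\medskip

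The step I expect to be the main obstacle is verifying that the algebraicity and real-form subtleties do not obstruct the eigenspace decomposition: the elements of $S$ are diagonalizable over $\mathbb{C}$ but need not have real eigenvalues, so the weights $\lambda$ are a priori complex-valued functionals and the decomposition $L=\bigoplus_\lambda L_\lambda$ is naturally a decomposition of the complexification $L^{\mathbb{C}}$. I would handle this by carrying out the orthogonality and nondegeneracy argument over $L^{\mathbb{C}}$ using the complex-bilinear extension of $\kappa$, observing that $S$ is stable under complex conjugation so that $Z_{L^{\mathbb{C}}}(S)=(Z_L(S))^{\mathbb{C}}$, and then descending the reductivity back to the real form $Z_L(S)$ — reductivity being preserved under taking real forms. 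This bookkeeping between the real algebra and its complexification is the only genuinely delicate point; the core Killing-form orthogonality computation is routine.
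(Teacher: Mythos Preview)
Your proof is correct and takes a genuinely different route from the paper's. The paper embeds $S$ in a maximal torus $T$, passes to the complexification, and uses the root-space decomposition of $L^{\mathbb C}$ with respect to $T^{\mathbb C}$: the centralizer $Z_{L^{\mathbb C}}(S^{\mathbb C})$ is generated by $T^{\mathbb C}$ together with the root vectors $X_\alpha$ for those roots $\alpha$ vanishing on $S$; the subalgebra $L_1$ generated by these root vectors and the corresponding coroots is shown to be semisimple and to equal the derived subalgebra, and the quotient by $L_1$ is a homomorphic image of $T^{\mathbb C}$, hence a torus with no nilpotents, forcing the radical to be central. You instead stay with the Killing form: the weight-space orthogonality $\kappa(L_\lambda,L_\mu)=0$ for $\lambda+\mu\neq 0$ gives nondegeneracy of $\kappa$ on $Z_L(S)=L_0$, and then the Cartan-type argument (elements of $[Z_L(S),R]$ act trivially on every composition factor of $L$ as a $Z_L(S)$-module, hence are $\kappa$-orthogonal to all of $Z_L(S)$) forces $[Z_L(S),R]=0$. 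Your approach is shorter and more invariant, needing no choice of maximal torus or positive system; the paper's approach, though longer, explicitly identifies the semisimple part as a root subalgebra, which fits the algorithmic, root-system-oriented spirit of the surrounding sections. Both handle the real/complex passage the same way, by proving the statement over $\mathbb C$ and descending.
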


\begin{proof}
Recall that we defined a nonzero complex number to be
positive if its real part is positive or if its real part is zero and the
imaginary part is positive.

Include $S$ in a maximal torus $T$. The complexification of $T$ is a
maximal torus of the complexification $L^{\mathbb C}$ of $L$
\cite[Corollary~7]{AB}, and the centralizer
$Z_L(S)$ of $S$ in $L$ is the same as the real points of the centralizer of
$S^{\mathbb C}$ in $L^{\mathbb C}$. Now $Z_{L^\mathbb{C}}(S^{\mathbb C})$ is
generated by $T^{\mathbb C}$ and the root vectors $X_\alpha$
such that $\alpha(S)\,=\, 0$. This is a closed set of roots and the Lie
algebra $Z_{L^\mathbb{C}}(S^{\mathbb C})$ contains the root vector $X_{-\alpha}$
for every $\alpha$ as above. By
extending a basis of $S^{\mathbb C}$ say $\{s_i\}_{i=1}^m$
to a basis of $T^{\mathbb C}$, say $\{s_i\}_{i=1}^n$ and declaring a root
$r$ to be positive if the
first nonzero number $r(s_i)$ is positive, we see that the indecomposable
positive roots of $Z_{L^\mathbb{C}}(S^{\mathbb C})$ are simple roots of
$T^{\mathbb C}$. Thus these generate a semisimple subalgebra $L_1$ of
$Z_{L^\mathbb{C}}(S^{\mathbb C})$ and
$Z_{L^\mathbb{C}}(S^{\mathbb C}) \,=\, \langle L_1,\, T^{\mathbb C}\rangle$. Since
$T^{\mathbb C}$ normalizes $L_1$ the commutator is
$L_1$. Hence $Z_{L^\mathbb{C}}(S^{\mathbb C})/L_1$, being an image of
$T^{\mathbb C}$, contains no nilpotents.

Therefore, the Levi decomposition of $Z_{L^\mathbb{C}}(S^{\mathbb C})$ is 
$$
Z_{L^\mathbb{C}}(S^{\mathbb C})\,=\, L_1+ R\, ,
$$
and $R$, being solvable with no nilpotents, is a torus. As $[L_1,\, R]$ is
contained in both $L_1$ and $R$ it must be $0$. Hence $R$ is a central
torus and it is equal to $Z(Z_{L^\mathbb{C}}(S^{\mathbb C}))$ --- the center of
$Z_{L^\mathbb{C}}(S^{\mathbb C})$.
Therefore, the Levi decomposition of $Z_{L^\mathbb{C}}(S^{\mathbb C})$ is 
$[Z_{L^\mathbb{C}}(S^{\mathbb C}),\, Z_{L^\mathbb{C}}(S^{\mathbb C})]
\oplus Z(Z_{L^\mathbb{C}}(S^{\mathbb C}))$.
Taking real points gives the Levi decomposition
$Z_L(S)\,=\, [Z_L(S),\, Z_L(S)]\oplus Z(Z_L(S))$.
\end{proof}

The algorithms given below are similar to each other. For the
convenience of the user we have written down complete details-at the
expense of repetition-of the most frequent types of algebras
encountered in practice.

\subsection{Algorithm for embedding a given abelian subalgebra of semisimple elements
with real eigen-values}

Here we give an algorithm for embedding a given abelian subalgebra of semisimple elements
with real eigen-values in a maximal algebra of such elements and in a maximally real
Cartan algebra: in this algorithm, the ambient algebra is assumed to be semisimple.

For a subalgebra $H$ of $L$, let $N_L(H)$, $Z_L(H)$, $Z(H)$ and $H'$
denote its normalizer in $L$, its centralizer in $L$, its center and
its derived algebra respectively. For notational convenience, we
will also write $N(H)$ for $N_L(H)$.

Let $A$ be real torus (defined in Section \ref{secrr}).

\textbf{Step 1:}\, Compute $Z_L(A)$, the centralizer of $A$ in $L$,
the derived algebra $Z_L(A)'$ of $Z_L(A)$ and the center $Z(Z_L(A))$
of $Z_L(A)$. Then one has the direct sum decomposition
$$
Z_L(A)\,=\, Z_L(A)'\oplus Z(Z_L(A))\, .
$$

\textbf{Step 2:}\, Compute the Killing form of $Z_L(A)'$. If it is negative definite, then the real
part of the subalgebra $Z(Z_L(A))$ is a maximal real torus.

\textbf{Step 3:}\, If the Killing form of $Z_L(A)'$ is indefinite,
compute the Cartan decomposition of $Z_L(A)'$, and pick any nonzero
element from the radial part of the decomposition and adjoin it to
$A$.

Repeat Step 1 and Step 2 till an abelian algebra, which we again denote by $A$, is obtained
which has all real eigen-values in the adjoint representation -- and in the decomposition
$Z_L(A)\,=\, Z_L(A)'\oplus Z(Z_L(A))$, the Killing form of $Z_L(A)'$ is negative definite.

At this stage, a maximal real torus containing the given algebra has been obtained.
Denote it again by $A$.

The compact part of $Z(Z_L(A))$ together with a maximal torus of $Z_L(A)'$ is a compact torus.
Adjoining it to $A$ gives a maximally real Cartan algebra.

\begin{remark}
By an entirely similar procedure, a compact torus can be embedded in a maximally compact
Cartan subalgebra.
\end{remark}

\subsection{Algorithm for embedding a commutative subalgebra of
ad-nilpotent elements to a maximal commutative subalgebra of such
elements}

Let $U$ be an abelian algebra of ad--nilpotent elements. Let
$$
Z_L(U)\,=\, S\oplus R
$$
be the Levi decomposition of $Z_L(U)$. Compute the derived
subalgebra $R'\, \subset\, R$. If $\dim (R'+U)\, >\, \dim U$, adjoin
any element of $R'$ complementary to $U$ to obtain a commutative subalgebra of
ad--nilpotent elements. Repeat this procedure until an abelian
algebra of ad--nilpotent elements is obtained -- which we denote
again by $U$ -- so that in the Levi decomposition on $Z_L(U)$, the
algebra $R'$ is contained in $U$.

At this stage if $R$ contains $U$ as a proper subalgebra, consider an element $x$ 
of $R$ complementary to $U$. Then the nilpotent and semisimple parts of $x$ belong to 
$R$. If $x$ has a nonzero nilpotent part $x_n$, then the subalgebra generated by $U$ 
and $x_n$ is commutative consisting of ad--nilpotent elements.

Repeating the above procedure, we may assume that $U$ is a commutative
subalgebra of ad--nilpotent elements such that in the Levi decomposition
$$
Z_L(U) \,=\, S\oplus R\, ,
$$
$R'\, \subset\, U$, and every element in a basis of $R$ complementary to $U$ consists of
semisimple elements.

If the Killing form of $S$ is not negative definite, then $S$ will
have a nontrivial Cartan decomposition. Take an element $\alpha$ in
the radial part of the Cartan decomposition of $S$. As $S$ has no
center, the endomorphism ${\rm ad}(\alpha)$ of $S$ has a nonzero
real eigen-value. In fact any element all of whose eigen-values are
real will do. If $u$ is a nonzero eigen-vector of ${\rm ad}(\alpha)$ for such a nonzero
real eigen-value, then ${\rm ad}(u)$ is nilpotent on $S$. The reason is that $S$ is a
direct sum of eigen-spaces for ${\rm ad}(\alpha)$
and if $S_a$,$S_b$ are two such eigen-spaces, then $[S_a,\, S_b]$
is contained in $S_{a+b}$; therefore a sufficiently high power of ${\rm ad}(u)$ 
annihilates $S$. This implies that ${\rm ad}(u)$ is also nilpotent on $L$, by
uniqueness of Jordan decomposition. Adjoin $u$ to $U$ to obtain a higher
dimensional commutative algebra of nilpotents.

Thus repeating the above procedure we ultimately have a commutative subalgebra consisting
of ad--nilpotent elements, which we again denote by $U$, such that
$$
Z_L(U)\,=\, S+R\, ,
$$
$R'\, \subset\, U$, every element in a basis of $R$ complementary to $U$ consists of semisimple
elements and $S$ has a negative definite Killing form.

At this stage $U$ is a maximal abelian subalgebra of ad--nilpotent elements.

A very similar argument (-detailed below-) gives the embedding of a
given ad--nilpotent subalgebra in a maximal ad--nilpotent
subalgebra. Here one is assured of conjugacy of these subalgebras
\cite{AB}. Also, its normalizer will pick up a torus whose
eigen-values are all real and which is maximal with these properties.
Embedding this in a maximal torus -- using Algorithm 3.1 -- one will
obtain a maximally split Cartan subalgebra of $L$.

\subsection{Algorithm for embedding a subalgebra $U$ of ad--nilpotent elements to a 
maximal subalgebra of such elements}\mbox{}

\textbf{Step 1:}\, Find a normalizer of $U$ and compute its Levi decomposition
$$
N(U)\,=\, S\oplus R\, ,
$$
where $R$ is the radical.

\textbf{Step 2:}\, Compute the derived algebra $R'$ of $R$. If $\dim (R'+U)\, >\, \dim
U$, then this is again an ad--nilpotent algebra.

Repeat Step 1 and Step 2 so that ultimately $R'+U\,=\, U$. At this stage $R/U$ is abelian.

\textbf{Step 3:}\, We want to enlarge $U$ further so that $R/U$ consists entirely of semisimple
elements. To do this, take a basis of $U$, say $u_1\, , \cdots\, ,u_k$, and enlarge it to a basis
of $R$ by adjoining $v_1\, , \cdots\, ,v_\ell$. Find the Jordan decomposition of all
$v_1\, , \cdots\, ,v_\ell$. Adjoin to $U$ the nilpotent parts of all the $v_1\, , \cdots\, ,v_\ell$.
Denote by $\widetilde U$ the algebra obtained this way.

Now repeat Step 1, Step 2 and Step 3 till we have the Levi decomposition
$$
N(\widetilde U) \,=\, S+R
$$
such that $R'\, \subset\, \widetilde U$ and $R/\widetilde U$ consists only of
semisimple elements.

\textbf{Step 4:}\, If the Killing form of $S$ is not negative
definite, then it will have a nontrivial Cartan decomposition. Take
an element $\alpha$ in the radial part of the Cartan decomposition
of $S$. As the center of $S$ is trivial, the endomorphism ${\rm
ad}(\alpha)$ of $S$ has a nonzero real eigen-value. Let $u$ be a
nonzero eigen-vector for such an eigen-value. Then ${\rm ad}(u)$ is
nilpotent on $S$ and therefore on the Lie algebra $L$ -- by
uniqueness of the Jordan decomposition. Adjoin $u$ to $U$ to obtain
a higher dimensional algebra of nilpotents.

Iterating this procedure, we finally reach the situation that we have an ad--nilpotent
subalgebra, which we denote again by $U$, that contains the original ad--nilpotent subalgebra,
such that the Levi decomposition of $N(U)$ is
$$
N(U)\,=\, S\oplus R\, ,
$$
where $S$ has a negative definite Killing form, $R'\, \subset\, U$ and $R/U$ consists only
of semisimple elements in the sense that if we extend a basis of $U$ to a basis of $R$ and
find the Jordan decomposition of the basis elements outside $U$, then the nilpotent parts
all belong to $U$ \cite{AB}.

At this stage, $U$ is a maximal ad--nilpotent algebra containing the given ad--nilpotent algebra.

Finally, the abelian algebra representing $R/U$ is a torus and its real part is a maximal abelian
algebra consisting of real semisimple elements \cite{AB}. Denote this algebra by $A$. Then $A$ can
be enlarged to a maximally split Cartan algebra of the whole algebra, using Algorithm 3.1. This
algebra permutes the common eigen-spaces of $A$.

\section{Applications to structure of symmetry algebras related to certain equations of
physics}\label{Se4}

Before giving applications to symmetry algebras of higher dimensions, we illustrate
the algorithms to obtain structural information for the algebras ${\mathfrak
s}{\mathfrak o}(4)$, ${\mathfrak s}{\mathfrak o}(1,3)$ and ${\mathfrak s}{\mathfrak
o}(2,2)$.

Recall that if $A$ is an $n\times n$ diagonal matrix with diagonal
entries $1$, $-1$, then the Lie algebra of the corresponding
orthogonal group has generators $e_{ij}-e_{ji}$ if $a_{ii}a_{jj}
\,=\, 1$, and $e_{ij}+e_{ji}$ if $a_{ii}a_{jj} \,=\, -1$. Moreover,
if $A$ has the first $p$ diagonal entries $1$, and the next $q$
diagonal entries $-1$, where $p+q\,=\, n$, then the Lie algebra of
the corresponding orthogonal group is generated by
$$
e_{1,2}-e_{2,1}\, , \cdots\, , e_{p-1,p}-e_{p,p-1}\, , e_{p,p+1}+e_{p+1,p}\, ,
e_{p+1,p+2}-e_{p+2, p+1}\, , \cdots\, , e_{n-1,n}-e_{n,n-1}\, .
$$

\subsection{${\mathfrak s}{\mathfrak o}(4)$}\label{sec4.1l}

We now derive the factorization of ${\mathfrak s}{\mathfrak o}(4)$ using roots of
its maximal torus.

A basis of $V\,=\,{\mathfrak s}{\mathfrak o}(4)$ is
$$
e_1\,=\, e_{12}-e_{21}\, ,e_2\,=\, e_{13}-e_{31}\, ,e_3\,=\, e_{14}-e_{41}\, ,
e_4\,=\, e_{23}-e_{32}\, ,e_5\,=\, e_{24}-e_{42}\, , e_6\,=\, e_{34}-e_{43}\, .
$$
Using Algorithm 3.1, a Cartan subalgebra $C$ is generated by
$\{e_1\,, e_6\}$. The roots of $C$ are
$$
a\,:=\, (\sqrt{-1}\, , \sqrt{-1})\, ,\ b\,:=\, (\sqrt{-1}\, ,
-\sqrt{-1})\, ,\ -a\, ,\ -b\, .
$$
As $a+b$ is not a root, this root system is of type $A_1\times A_1$.

Also, conjugation maps a root to its negative. Thus the subalgebras
generated by the eigen-spaces $V_r\, , V_{-r}$, $r\,=\, a\, , b$,
contain a real form of $\text{sl}(2, {\mathbb C})$, which must be
isomorphic to ${\mathfrak s}{\mathfrak o}(3)$.

In more detail, we need to compute only the eigen-vectors for the positive roots. Their
real and imaginary parts will give the decomposition of the compact algebra $V$.
Now,
$$
V_a\,=\, \langle e_2+\sqrt{-1}e_3+\sqrt{-1}e_4-e_5\rangle\, ,\
V_b\,=\, \langle e_2+\sqrt{-1}e_3-\sqrt{-1}e_4+e_5\rangle\, .
$$
The real and imaginary parts of the basis elements in $V_a$ and $V_b$ and
generate ${\mathfrak s}{\mathfrak o}(4)$.

Let
$$
u_1\,=\, e_2-e_5\, , \ v_1\,=\, e_3+e_4\, ,\ u_2\,=\, e_2+e_5\, ,\ v_2\,=\, e_3-e_4\, .
$$
Then $u_1\, , v_1\, , [u_1\, , v_1]\,=\, -2(e_1+e_6)$ generate a copy
of ${\mathfrak s}{\mathfrak o}(3)$. Note that $u_2\, , v_2\, , [u_2\, , v_2]\,=\,
2(e_1-e_6)$ also generate a copy of ${\mathfrak s}{\mathfrak o}(3)$.
These two copies of ${\mathfrak s}{\mathfrak o}(3)$ commute because the root system
is of type $A_1\times A_1$. This gives the well known fact that ${\mathfrak s}{\mathfrak
o}(4)\,=\,{\mathfrak s}{\mathfrak o}(3)\oplus {\mathfrak s}{\mathfrak o}(3)$.

\subsection{${\mathfrak s}{\mathfrak o}(1,3)$}

A basis of $V\,=\, {\mathfrak s}{\mathfrak o}(1,3)$ is
$$
e_1\,=\, e_{12}+e_{21}\,
,e_2\,=\, e_{13}+e_{31}\, ,e_3\,=\, e_{14}+e_{41}\, , e_4\,=\, e_{23}-e_{32}\,
,e_5\,=\, e_{24}-e_{42}\, , e_6\,=\, e_{34}-e_{43}\, .
$$
Using Algorithm 3.1 a Cartan subalgebra $C\,=\, \langle e_1\, , e_6\rangle$ is obtained.
Note that there is no real split or compact Cartan subalgebra.

The roots of $C$ are
$$
a\,:=\, (1\, , -\sqrt{-1})\, ,\ b\,:=\, (1\, , \sqrt{-1})\, , \-a\, ,\
-b\, .
$$
The root system is of type $A_1\times A_1$, with positive roots $a\, , b$ and
conjugation maps $a$ to $b$.

The real rank is one, and the eigen-values of $\text{ad}(e_1)$ are $-1,\,-1,\,0,\,0,\,1,\,1$.
The corresponding root spaces are
$$
V_1\,=\, \langle e_3+e_5\, , e_2+e_4\rangle\, ,
V_{-1}\,=\, \langle -e_3+e_5\, , -e_2+e_4\rangle\, .
$$
Also, $[e_3+e_5\, , -e_3+e_5]\,=\,2e_1$; the subalgebra generated by
$e_3+e_5\, , -e_3+e_5$ is $\text{sl}(2,{\mathbb R})$, while the
subalgebra generated by $e_4\, , e_5\, , e_6$ is ${\mathfrak s}{\mathfrak o}(3)$.

Thus a maximal solvable subalgebra consisting of elements with real eigen-values in the
adjoint representation is
$$
\langle e_1\, , e_3+e_5\, , e_2+e_4\rangle\, ,
$$
and a maximal solvable subalgebra is
$$
\langle e_6\, ,\, e_1\, ,\, e_3+e_5\, ,\, e_2+e_4\rangle\, .
$$

\subsection{${\mathfrak s}{\mathfrak o}(2,2)$}

A basis of $V\,=\, {\mathfrak s}{\mathfrak o}(2,2)$ is
$$
e_1\,=\, e_{12}-e_{21}\,
,e_2\,=\, e_{13}+e_{31}\, ,e_3\,=\, e_{14}+e_{41}\, , e_4\,=\, e_{23}+e_{32}\,
,e_5\,=\, e_{24}+e_{42}\, , e_6\,=\, e_{34}-e_{43}\, .
$$
Using Algorithm 3.1, a real split Cartan subalgebra is $C\,=\,
\langle e_2\, , e_5\rangle$, while a compact Cartan subalgebra is
$\langle e_1\, , e_6\rangle$.

The roots of $C$ are
$$
a\,:=\, (1\, , 1)\, , \ b\,:=\, (1\, , -1)\, ,\ -a\, ,\ -b\, .
$$
The root spaces are
$$
V_a\,=\, \langle e_1-e_3+e_4-e_6\rangle \, ,\ V_b\,=\, \langle e_1+e_3+e_4+e_6\rangle
$$
$$
V_{-a}\,=\, \langle e_1+e_3-e_4-e_6\rangle\, ,\
V_{-b}\,=\, \langle e_1-e_3-e_4+e_6\rangle\, .
$$
Conjugation fixes the roots. Consequently, the subalgebra generated
by a root spaces of a root and its negative is isomorphic to
$\text{sl}(2,{\mathbb R})$.

Therefore, denoting the subalgebra generated by $V_r\, , V_{-r}$ by $\langle
V_r\, , V_{-r}\rangle$ the decomposition $\langle
V_a\, , V_{-a}\rangle\oplus \langle V_b\, , V_{-b}\rangle$ gives an isomorphism
of $\text{sl}(2,{\mathbb R})\oplus \text{sl}(2,{\mathbb R})$ with
${\mathfrak s}{\mathfrak o}(2,2)$.

\subsection{Lie symmetries of wave equations}\label{se4.4}

The algebra of Lie point symmetries of the wave equation in a flat 4-d space is sixteen
dimensional and determined by the vector fields (following the same order as given in
\cite{ADGM} and using the notation in which $X$ is represented with $e$):

\begin{align*}
&e_{1} = yt \partial_{t}+xy\partial_{x}+\frac{(y^2+t^2-x^2-z^2)}{2}\partial_{y}+yz\partial_{z}-uy\partial_{u}, \\
&e_{2} = y\partial_{t} + t\partial_{y}, \\
&e_{3} = xt \partial_{t}+\frac{(x^2+t^2-y^2-z^2)}{2}\partial_{x}+xy\partial_{y}+xz\partial_{z}-ux\partial_{u}, \\
&e_{4} = x\partial_{t} + t\partial_{x}, \\
&e_{5} = zt \partial_{t}+zx\partial_{x}+yz\partial_{y}+\frac{(z^2+t^2-y^2-x^2)}{2}\partial_{z}-uz\partial_{u},\\
&e_{6} = z\partial_{t} + t\partial_{z}, \\
&e_{7} = t\partial_{t} + x\partial_{x}+y\partial_{y} + z\partial_{z}, \\
&e_{8} = \partial_{t}, \\
&e_{9} = (t^2+x^2+y^2+z^2) \partial_{t} + 2tx \partial_{x}+ 2ty \partial_{y}+ 2tz \partial_{z}- 2ut\partial_{u}, \\
&e_{10} = \partial_{y}, \\
&e_{11} = \partial_{x}, \\
&e_{12} = \partial_{z}, \\
&e_{13} = z\partial_{y}-y\partial_{z}, \\
&e_{14} = z\partial_{x}-x\partial_{z}, \\
&e_{15} = y\partial_{x}-x\partial_{y}, \\
&e_{16} = u\partial_{u}.
\end{align*}

The commutator algebra of the finite dimensional part of the symmetry algebra of 
the wave equation on Minkowski space-time is 15 dimensional. By computing its 
radical or the Killing form one sees that this 15-dimensional algebra is 
semisimple.

Its basis is $e_1,\ldots,e_6,\,e_7-e_{16},e_{8},\ldots, e_{15}.$ For
notational convenience we will denote-only in this section-
$e_7-e_{16}$ by $e_7$.

The commutator table is reproduced in Appendix 1. The translations
parallel to the coordinate axes are
$$
\partial_x\,=\, e_{11}\, ,\partial_y\,=\, e_{10}\, ,\partial_z\,=\, e_{12}\, ,
\partial_t\,=\, e_8
$$
and they form an ad--nilpotent subalgebra.

Let
$$
U\, =\, \langle e_8\, , e_{10}\, , e_{11}\, ,e_{12}\rangle\, .
$$
We will use standard Maple commands and Algorithm 3.3 to embed $U$ in a maximal 
ad-nilpotent subalgebra $\widetilde{U}$ and also compute the normalizer of this 
subalgebra. It may be pointed out that the implementation of Algorithm 3.3 will give 
rise to the identification of several lower dimensional non-conjugate subalgebras. 
Since the standard symmetry reduction from translations yield trivial solutions, 
therefore our algorithm will provide translations embedded into those subalgebras 
that will provide non-trivial solutions of the wave equation. Consequently this 
approach provides a direct use of translational subalgebras.

As explained in Algorithm 3.3, in general, if
$$
N(\widetilde{U})\, =\, S+R
$$
is the Levi decomposition of $N(\widetilde{U})$ then $S$ has a
negative definite Killing form, $R'$ is contained in $U$ and $R/U$ consists only of
semisimple elements in the sense that if we extend a basis of $U$ to a basis of $R$
and find the Jordan
decomposition of the basis elements outside $U$, then the nilpotent parts all belong
to $U$.

As the abelian algebra representing $R/U$ is a torus and its real part $A$ is a
maximal abelian
algebra consisting of real semisimple elements, $A$ can be enlarged to a maximally split
Cartan algebra of the whole algebra, using Algorithm 3.1. This algebra permutes the
common eigen-spaces of $A$.

Following Algorithm 3.3, we first compute $N(U)$ and its Levi decomposition.
We have
$$
N(U)\,=\, R\oplus S\, ,
$$
where $R\,=\, \langle U\, , e_7\rangle$ is the radical, and
$$
S\,=\,
 \langle e_{15}\, , e_{14}\, , e_{13}\, ,e_{6}\, , e_{5}\, , e_{4} \rangle
$$
is semisimple with Cartan decomposition
$$
S\,=\, \langle e_{15}\, , e_{14}\, , e_{13}\rangle\oplus
\langle e_{6}\, , e_{5}\, , e_{4} \rangle
$$
with compact part $K\, =\, \langle e_{15}\, , e_{14}\, ,
e_{13}\rangle$ and radial part $P\,=\, \langle e_{6}\, , e_{5}\, ,
e_{4} \rangle$.

The element $e_7$ representing $R/U$ is real semisimple and $e_6$ is maximal abelian
in $P$. The compact subalgebra $K$ is the subalgebra of spatial rotations.

The eigen-values of $\text{ad}(e_6)$ in $S$, counting multiplicities,
are $1,1,-1,-1,0,0$ and eigen-vectors for eigen-value 1 are
$-e_{15}+e_4$, $-e_{13}+e_6$. Therefore, as the eigen-vectors for
positive eigen-values of a real semisimple element of $S$ form an
ad-nilpotent subalgebra, following Algorithm 3.3, we adjoin
$-e_{15}+e_4$, $-e_{13}+e_6$ to $U$ to get an ad-nilpotent algebra
$\tilde{U}$ and compute its normalizer. We find that
$$
N(\widetilde{U})\,=\, \langle \widetilde{U}\, , e_2\, ,e_7\, ,e_{14}\rangle\, .
$$
The subalgebra $\langle e_2\, ,e_7\, ,e_{14}\rangle$ is abelian, and is a torus, whose
real part is $\langle e_2\, ,e_7\rangle$ and compact part is $\langle e_{14}\rangle$.

Thus $N(\widetilde{U})$ is self--normalizing and solvable. Therefore, by Algorithm 3.3,
$\widetilde{U}$ is a maximal ad-nilpotent subalgebra containing $U$.
Using Algorithm 3.1, we find that $\langle e_2\, ,e_7\, ,e_{14}\rangle$ is a Cartan
subalgebra and $A\,=\,\langle e_2\, ,e_7\rangle$ is a maximal abelian
subalgebra of real semisimple elements.

The roots of $A$ on $N(\widetilde{U})$ are
$$
(-1\, ,0)\, , (-1\, ,-1)\, , (-1\, ,1)\, , (0\, ,1)\, ;
$$
here, to say that $(r\, ,s)$ is a root means that there is a common
eigen-vector $X$ for $A$ which is not centralized by $A$ and
$$
[e_7\, , X]\, =\, rX\, , [e_2\, , X]\, =\, sX\, .
$$
Let
$$
a\,=\, (-1\, ,0)\, , b\,=\, (-1\, ,-1)\, , c\,=\, (-1\, ,1)\, , d\,=\, (0\, ,1)\, .
$$
This is a positive system of roots for $A$ determined by $N(\widetilde{U})$.
The only positive roots which are sums of positive roots are $a+d\,=\, c$ and $b+d\,=\, a$.
Therefore, the simple roots are $b\, , d$ and the roots as nonnegative integral
combinations of the simple roots are $b\, , d\, , b+d\, ,b+2d$.

Therefore, the Dynkin diagram of the reduced root system is of type
$B_2$ with $b$ a long root.

Let $\omega_7\, , \omega_2$ be linear functions on $A$ dual to the
ordered basis $e_7\, , e_2$. With this notation, the roots are
$$
-\omega_7\, ,-\omega_7-\omega_2\, ,-\omega_7+\omega_2\, , \omega_2\,
.
$$
Let $L\,=\,N(\widetilde{U})$. The corresponding eigen-spaces in $L$ are
$$
L_{-\omega_7}\,=\, \langle e_{12}\, ,e_{11}\rangle\,
,L_{-\omega_7-\omega_2}\,=\,\langle e_8+e_{10} \rangle\, ,
L_{-\omega_7+\omega_2}\,=\, \langle e_8-e_{10}\rangle\, ,
L_{\omega_2}\,=\, \langle -e_{13}+e_6\, ,-e_{15}+e_4\rangle\, .
$$
Finally $L_0\,=\, \langle A\, ,e_{14}\rangle$ and $e_{14}$ operates
on these eigen-spaces, as rotations on $L_{-\omega_7}$ and
$L_{\omega_2}$, while it commutes with $L_{-w_7-w_2}$ and
$L_{-\omega_7+\omega_2}$.

The absolute root system is determined by common eigen-vectors for the
Cartan algebra
$$
C\,=\, \langle e_7\, ,e_2\, ,e_{14}\rangle\, .
$$
The positive roots are
$$
a\,=\, (0,1,-\sqrt{-1})\, , b\,=\,(0,1,\sqrt{-1})\, , c\,=\, (1,0,\sqrt{-1})
$$
$$
d\,=\, (1,1,0)\, , e\,=\, (1,-1,0)\, ,f\,=\, (1,0,-1)\, .
$$
Forming sums of pairs of positive roots and removing those roots
that are sums of positive roots, we find that the simple roots are
$a\, ,e\, ,b$ with Dynkin diagram of type $A_3$ with $e$ the simple
middle root.

Conjugation maps $a$ to $b$ and fixes $e$. Thus the algebra is a
real form of $\text{sl}(4,{\mathbb C})$.

To find a maximally compact subalgebra -- if any -- we follow a
procedure analogous to Algorithm 3.1 -- starting with a compact
element -- namely one which generates a compact subgroup. For
example, as $\langle e_{15}\, ,e_{14}\, ,e_{13}\rangle$ generate
${\mathfrak s}{\mathfrak o}(3)$, because $e_{15}\,=\,
x\partial_y-y\partial_x$ and $e_{14}\,=\, x\partial_z-z\partial_x$,
we can with begin with $e_{15}$, compute its centralizer, the center
of its centralizer and its derived algebra. If the derived algebra
is trivial, then the centralizer $e_{15}$ of would be a maximal
torus and its compact part will be a maximal compact subalgebra
containing $e_{15}$. If the derived algebra is nontrivial, it must
have a compact element, say $t$. Adjoining it to $e_{15}$ and
computing the centralizer of $\langle e_{15}\, ,t\rangle$ and its
derived algebra and repeating the process, we will ultimately obtain
a maximal compact subalgebra containing $e_{15}$.

In this case we find that $C_k\,=\, \langle t_1\, ,t_2\, ,e_{15}\rangle$ is a
maximally compact Cartan subalgebra, where
$$
t_1\,=\, 2e_{12}+e_5\, ,t_2\,=\, e_9+4e_8\, .
$$
The positive roots are
$$
a\,=\, (\sqrt{-1}, -4\sqrt{-1},0)\, , b\,=\,(0,4\sqrt{-1},\sqrt{-1})\, , c\,=\,
(\sqrt{-1},0,-\sqrt{-1})\, ,
$$
$$
d\,=\, (0,4\sqrt{-1},-\sqrt{-1})\, , e\,=\, (\sqrt{-1},4\sqrt{-1},0)\, ,f\,=\,
(\sqrt{-1},0,\sqrt{-1})\, .
$$
The simple positive roots are $d\, ,a\, ,b$ with Dynkin diagram of
type $A_3$ with $a$ the middle simple root.

Conjugation maps every root to its negative. The root algebras generated by the real
and imaginary parts of the root vectors are copies of $\text{sl}(2,{\mathbb R})$ except
for roots $d+a\, ,a+b$, where they
generate copies of ${\mathfrak s}{\mathfrak o}(3)$.
Specifically, the subalgebras generated by the real and imaginary parts of root vectors
for $d+a$ and $a+b$ are
$$
\langle e_1+2e_{10}-2e_{14},\, e_3+2e_{11}+2e_{13},\, -4e_5- 8e_{12}+8e_{15}\rangle\, ,
$$
$$
\langle e_1+2e_{10}+2e_{14},\,-e_3-2e_{11}+2e_{13},\, -4e_5- 8e_{12}-8e_{15}\rangle\, .
$$
Both are isomorphic to ${\mathfrak s}{\mathfrak o}(3)$. Denoting
these subalgebras by $k_1$ and $k_2$ respectively, we find that the
centralizer of $k_1$ is
$$
k_2\oplus\langle 4e_8+e_9\rangle\, .
$$
Moreover as the centralizer of the copy of ${\mathfrak s}{\mathfrak o}(3)$ given by
$k_0\,=\,\langle e_{15}\,
,e_{14}\, ,e_{13}\rangle$ is $\langle e_7\,
,e_8\, ,e_9\rangle\,=\,\text{sl}(2,{\mathbb R})$, the subalgebra $k_1$ is not conjugate
to $k_0$.

Finally, as the Killing form of the full algebra has seven negative eigen-values, a maximal
compact subalgebra is
$$
k_1\oplus k_2\oplus \langle 4e_8 +e_9\rangle
$$
because $4e_8 +e_9$ generates the maximal compact subalgebra of
$\langle e_7\, ,e_8\, ,e_9\rangle\,=\,\text{sl}(2,{\mathbb R})$.

\section{Lie Symmetries of $f_{xx}=\frac{4}{3}f_{yy}^3, f_{xy}=f^2_{yy}$ and
$v'= (u'')^2$}\label{sec5}

These equations were considered by Cartan in the context of symmetries of a certain 
system of equations defined by differential forms \cite{Ca};
he showed that their symmetry algebra was the 14 dimensional simple group $G_{2}$.
Maple is able to compute both the
algebras by using commands for contact symmetries and for
generalized symmetries as well as its root space decomposition and
its maximal compact subalgebra. The latter equation was also
considered by Anderson, Kamran and Olver, \cite{AKO}, in the context
of generalized symmetries. To illustrate the algorithms of this
paper, we will use the table given in \cite{AKO} -- reproduced in
Appendix 2 to identify the algebra and determine several interesting
subalgebras. To streamline the calculations we will use repeatedly
the following facts-already mentioned in the Introduction.

If $H$ is a semisimple subalgebra of a semisimple algebra $G$ then
and element $X$ of $H$ is real semisimple, compact or nilpotent in
the adjoint representation $H$ of on itself,if and only if it is,
respectively, real semisimple, compact or nilpotent in the adjoint
representation of $H$ in $G$. Moreover, the derived algebras of the
radical of normalizer or centralizer of any subalgebra are
nilpotent.

The symmetry algebra has basis $X_1, X_2, \cdots , X_{14}$ and is given by:

\begin{align*}
&X_{1} = (\frac{2}{3}u'^2-uu'')\partial_{x}+(\frac{1}{2}uv+\frac{4}{9}u'^3-uu'u'')\partial_{u}+(\frac{1}{2}v^2-\frac{1}{3}uu''^3)\partial_{v}, \\
&X_{2}=(\frac{4}{3}x^2u'-2xu-\frac{1}{3}x^3u'')\partial_{x}+(\frac{1}{6}x^3v+\frac{2}{3}x^2u'^2-2u^2-\frac{1}{3}x^3u'u'')\partial_{u}+(2xu'v-2uv-\frac{1}{9}x^3u''^3-\frac{8}{9}u'^3)\partial_{v}, \\
&X_{3} = (\frac{8}{3}xu'-2u-x^2u'')\partial_{x}+(\frac{1}{2}x^2v+\frac{4}{3}xu'^2-x^2u'u'')\partial_{u}+(2vu'-\frac{1}{3}x^2u''^3)\partial_{v}, \\
&X_{4} = (\frac{8}{3}u'-2xu'')\partial_{x}+(xv+\frac{4}{3}u'^2-2xu'u'')\partial_{u}-\frac{2}{3}xu''^3\partial_{v}, \\
 \\
&X_{5} = -2u''\partial_{x}+(v-2u'u'')\partial_{u}-\frac{2}{3}u''^3\partial_{v}, \\
&X_{6} = \frac{1}{2}u\partial_{u} + v\partial_{v}, \\
&X_{7} = -\frac{1}{2}x^2\partial_{x} -\frac{3}{2}xu\partial_{u}-2u'^2\partial_{v}, \\
&X_{8} = -x\partial_{x}-\frac{3}{2}u\partial_{u}, \\
&X_{9} = -\partial_{x} , \\
&X_{10} = \frac{1}{6}x^3\partial_{u}+2(xu'-u)\partial_{v}, \\
&X_{11} = \frac{1}{2}x^2\partial_{u}+2u'\partial_{v}, \\
&X_{12} = x\partial_{u}, \\
&X_{13} = \partial_{u}, \\
&X_{14} = \partial_{v}.
\end{align*}

Using the commutator table in Appendix 2, and computing the
determinant of the Killing form by Maple, we find that it is
non-zero. Thus, the algebra is semisimple. The translations
$$
X_{14}\,=\, \partial_v\, ~~~ X_{13}\,=\, \partial_u
$$
clearly commute and $X_{12}\,=\, x\partial_u$ commutes with both.
One can check that they are nilpotent: this also follows from
computing the derived algebra of the radical of normalizer of U.
We want to embed
$$
U\,=\, \langle X_{14}\, , X_{13}\, , X_{12}\rangle
$$
in a maximal subalgebra whose
elements are all nilpotent. Following Algorithm 3.3 we compute the normalizer $N(U)$ and find
its Levi decomposition, using standard Maple commands:
$$
N(U)\,=\, \langle X_9\, , X_8\, , X_6\, , X_5\, , X_{14}\, , X_{13}\, , X_{12}
\, , X_{11}\, , X_{10}\rangle
$$
and its Levi decomposition is $R(N(U))\oplus S$, where the radical
$$
R(N(U))\,=\, \langle X_9\, , X_8 - 3X_6\, , X_{14}\, , X_{13}\, , X_{12}\, , X_{11}\rangle
$$
and the semisimple part $S\,=\, \langle X_8 + X_6\, , X_5\, , X_{10}\rangle$.
The commutators for the semisimple part are
$$
[X_8 + X_6\, , X_5] \,=\, 2X_5,\, [X_8 + X_6\, , X_{10} ]\,=\,
-2X_{10},\, [X_5 ,\, X_{10}]\,= -2(X_8 + X_6)\, .
$$
This means that $X_5$ and $X_{10}$ are nilpotent in the full algebra, $X_8 + X_6$ is
real semisimple in the full algebra and $X_5 + X_{10}$ is a compact element.
Following Algorithm 3.3, we compute the derived algebra of the radical $R(N(U))$.
It is
$$
{\widetilde U}\,=\,\langle X_9\, , X_{14}\, , X_{13}\, , X_{12}\, , X_{11}\rangle\, .
$$
The quotient $R(N(U)) /{\widetilde U}$ is represented by $X_8 - 3X_6$, which is a real
semisimple
element. (This also follows by computing the centralizer of $X_5 + X_{10}$ and its derived
algebra, which turns out to be $\langle X_8 -3X_6\, , X_{12}\, , X_3\rangle$.
This is a standard ${\rm sl}(2, {\mathbb R})$ with $X_8 -3X_6$ as real semisimple element.)

Following Algorithm 3.3 we compute again $N(\widetilde{U})$ and its Levi decomposition.
It turns out to be identical to the Levi decomposition of $N({U})$. We therefore
adjoint a nilpotent element coming from the semisimple part of the decomposition, say
$X_5$. Let
$$ \widetilde{\widetilde{U}}\,=\, \langle \widetilde{U}, X_5\rangle\, .
$$
Its normalizer is $\langle \widetilde{\widetilde{U}}\, , X_8\, , X_6\rangle$ and it is
solvable, with commutator $\widetilde{\widetilde{U}}$ and the quotient is represented
by the real torus $\langle X_6\, , X_8\rangle$. This also follows from noticing that
$X_8 + X_6$ is also real semisimple and commutes with $X_8 - 3X_6$.

Thus a maximal nilpotent subalgebra containing
$$
U\,=\, \langle X_{14}\, , X_{13}\, , X_{12}\rangle
$$
is $\widetilde{\widetilde{U}}\,=\,\langle X_5\, , X_{14}\, , X_{13}\, , X_{12}\, ,
X_{11}\, , X_9\rangle$. Finally, $C \,=\,\langle X_6\, , X_8\rangle$ is self-centralizing
and it is a
real split Cartan subalgebra of the full 14 dimensional algebra $L$.

Maple gives the following roots for $C$ in $\widetilde{\widetilde{U}}$: in fact, the
basis vectors for $\widetilde{\widetilde{U}}$ listed above are common eigen-vectors
for $C$ with eigen-values
$$
a\,=\,(\frac{1}{2}, \frac{3}{2}),\, b\,=\, (-1,0),\, c\,=\, (-\frac{1}{2}, \frac{3}{2}),\,
d\,=\, (-\frac{1}{2}, \frac{1}{2}),\, e\,=\, (-\frac{1}{2}, -\frac{1}{2}),\, f\,=\,
(0,1)\, .
$$
As explained in Section \ref{sR}, this is a positive system of roots and a simple
system of roots is given by adding pairs of positive roots and removing those that are
a sum of positive roots. We have
$$
a + b\,=\, c,\, a + e\,=\, f,\, d + e\,=\, b,\, e + f\,=\, d\, .
$$
Thus the simple roots are $a, e$ and the positive roots written in terms of these
roots are
$$
a,\, e,\, a + e \,=\, f,\, a + 2e \,= \, d,\, a + 3e \,=\, b,\, 2a + 3e \,=\, c\, .
$$
Therefore, the algebra $L$ is of type $G_2$ with a real split Cartan subalgebra. Any
semisimple split real Lie algebra is generated by copies of
${\rm sl}(2,{\mathbb R})$ corresponding to the
simple roots, with relations
$$
[X\, , Y] \,=\, H,\, [H\, , X] \,=\, 2X,\, [H\, , Y] \,=\, -2Y
$$
and its maximal compact subalgebra is generated by copies
of the compact element $X-Y$, which generates a circle, in these generating root
${\rm sl}(2,{\mathbb R})$ copies; see \cite[pp.~99--100]{St} for a global version of
these results.

Here, the root vectors corresponding to $a, -a$ are $X_5 ,X_{10}$;
the root vectors corresponding to $e, -e$ are $X_{11} , X_4$ and a maximal compact
compact subalgebra $K$ is thus generated by
$$
J_1 \,=\, X_5 + X_{10} ,\, J_2 \,=\, X_4 - X_{11}\, .
$$
The algebra is spanned by
$$
J_1 ,\, J_2 ,\, J_3 \,=\, X_1 + \frac{3}{8}X_{14} ,\, J_4 \,=\, X_2 - \frac{3}{4}X_{13} ,\,
 J_5 \,=\, X_3 +\frac{3}{4}X_{12} ,\, J_6 \,=\, X_7 - \frac{3}{2}X_9\, .
$$
To identify the structure of $K$, we must choose a Cartan subalgebra of $K$ and compute its
roots in the complexification of $K$ --- exactly as for ${\mathfrak s}{\mathfrak o}(4)$ in
Section \ref{sec4.1l}.
Now the centralizer of $J_1$ is $\langle J_1\, , J_5\rangle$, and it is therefore a
Cartan subalgebra of $K$. Its positive roots are
$$
(\sqrt{-2}\, , -\frac{1}{\sqrt{-2}}),\, (\sqrt{-2}\, , \frac{3}{\sqrt{-2}})\, .
$$
Therefore the system is of type $A_1\times A_1$.
The real and imaginary parts for the root vectors of $(\sqrt{-2}\, , -\frac{1}{\sqrt{-2}})$
are
\begin{equation}\label{g1}
J_3 - \frac{1}{6} J_6 ,\, \frac{\sqrt{2}}{4}J_4- \frac{\sqrt{2}}{8}J_2
\end{equation}
and for the root $(\sqrt{-2}\, , \frac{3}{\sqrt{-2}})$ they are
\begin{equation}\label{g2}
J_3 + \frac{1}{2} J_6 ,\, \frac{-\sqrt{2}}{4}J_4+
\frac{3\sqrt{2}}{8}J_2\, .
\end{equation}
The vectors in \eqref{g1} generate a copy of ${\mathfrak s}{\mathfrak o}(3)$ and
in \eqref{g2} also a copy of ${\mathfrak s}{\mathfrak o}(3)$ and these subalgebras commute.

This gives an explicit decomposition of a maximal compact subalgebra of $L$ as a sum of
copies of ${\mathfrak s}{\mathfrak o}(3)$.

\section{Solutions of the wave equation}

Section \ref{Se4} gives several non-conjugate subalgebras of the symmetry algebra of the 
wave equation. The reason that they are non-conjugate is that the structure of low 
dimensional Lie algebras is well documented in literature \cite{PBNL}, \cite{SW}.
In the case of three dimensional algebras, this is the 
Lie-Bianchi classification \cite[p.~479--562]{Lie1}, \cite{Bian}
In this case, this can 
be described very briefly. If $L$ is a 3 dimensional algebra and its
commutator $L'\,=\,[L,\, L]$ is 1--dimensional, then $L$ is completely determined by
the dimension of the centralizer of $L'$ in $L$; if $L'$ is 2--dimensional, then
$L'$ is abelian and the 
structure of $L$ is completely determined by the eigen-values of $L/L'$ in $L'$ and their 
multiplicities; in case $L'$ is of dimension 3, the eigen-values of a single element 
suffice to determine the structure of $L$ \cite[Corollary~2.2~and~Section 4.3]{ADMM}.
The algebras given below were of all possible Lie--Bianchi types. Their
identification is facilitated by determining the reduced root system using the 
algorithms of Section \ref{aL}, by enlarging a given subalgebra of commuting ad-nilpotent 
elements to a maximal solvable subalgebra that contains up to conjugacy all 
solvable subalgebras with real eigen-values. If the vector fields are in 
polynomial form and contain translations with respect to the independent variables, then 
these translations are ad-nilpotent. In case the Cartan algebra so obtained has a 
compact part, it must operate on the positive root spaces of its real part and this 
way one may obtain all 3 dimensional subalgebras of solvable subalgebras of all
Lie--Bianchi types.

In the specific example of the 15
dimensional algebra considered in Section \ref{se4.4}, we 
denoted --- for simplicity of notation the element $e_7 -e_{16}$ by $e_7$. Taking
this into account, the Cartan algebras obtained in Section \ref{se4.4} were
$\langle e_2,\, e_7 -e_{16},\, e_{14}\rangle$ and $\langle 2e_{12} + e_5,
\,e_9+ 4e_8,\, e_{15} \rangle$. This first Cartan 
algebra is maximally real with its real part 
$A\,=\, \langle e_2,\, e_7 -e_{16}\rangle$. For this reason, 
the relative root system is different from the absolute root system. The roots of 
$A$ were determined in Section \ref{se4.4} as
$$
b\,=\, -\omega_7-\omega_2\, , \ d\,=\, \omega_2\, , \ b+d\,=\, -\omega_7\, , 
\ b+2d\,=\, \omega_2- \omega_7\, ,
$$
where $\omega_2,\, \omega_7$ are dual to the ordered basis $e_2,\, e_7 -e_{16}$
of $A$.

The root spaces of $A$ in the maximal solvable algebra $L$ determined in
Section \ref{se4.4} were 
\begin{itemize}
\item $L_b\,=\, L_{-\omega_7-\omega_2}\,=\, \langle e_8,\, e_{10}\rangle$,

\item $L_d\,=\, L_{\omega_2}\,=\, \langle e_6-e_{13},\, e_{4}-e_{15}\rangle$,

\item $L_{b+d}\,=\, L_{-\omega_7}\,=\, \langle e_{12},\, e_{11}\rangle$,

\item $L_{b+2d}\,=\, L_{\omega_2-\omega_7}\,=\, \langle e_{8}- e_{10}\rangle$.
\end{itemize}
Thus, this displays the common eigen-vectors of $A$ and 
their multiplicities. Moreover as $[L_r,\, L_s]\, \subset\, L_{r+s}$,
and $2r$ is not a root, the 
root spaces given above contain commuting eigen-vectors of $e_2$ with different 
eigen-values and of $e_7 -e_{16}$ with repeated eigen-values. As any element centralizing 
$A$ operates on each root space of $A$, applying this to the compact part of the Cartan 
algebra $\langle A,\, e_{14}\rangle$ gives all possible solvable
3--dimensional solvable Lie--Bianchi types. Finally, using
the compact Cartan subalgebra, we found a maximal compact subalgebra,
whose derived algebra was ${\rm so}(4)\,=\, {\rm so}(3)\oplus {\rm so}(3)$. The
centralizer for the spatial rotations was ${\rm sl}(2,{\mathbb R})$. For this
reason, the 3--dimensional simple algebras given below are 
non-conjugate. We now proceed to find the corresponding reductions and invariant 
solutions.

A preliminary step is to find the invariants of a given algebra of vector
fields. The number of functionally independent invariants can be found
from the row reduced echelon form of the operators. In the row reduced
echelon form, the resulting operators always commute \cite{ABGM}.

The equation is
\begin{equation}\label{eq-w}
{\frac {\partial ^{2}}{\partial {x}^{2}}}u \left( t,x,y,z
 \right) +{\frac {\partial ^{2}}{\partial {y}^{2}}}u \left( t,x
,y,z \right) +{\frac {\partial ^{2}}{\partial {z}^{2}}}u \left(
t,x,y,z \right) -{\frac {\partial ^{2}}{\partial {t}^{2}}}u
 \left( t,x,y,z \right) =0
\end{equation}

\noindent \textbf{(I) }{$\dim G' =0$}

\noindent \textbf{(I.a) } ${\mathcal L}_{1,0}
\,=\, \langle e_{8},e_{10},e_{11}\rangle$

The joint invariants of ${\mathcal L}_{1,0}$ are
\begin{equation}\nonumber
 z, u \\
\end{equation}
so that the corresponding similarity transformations
\begin{equation}
 p=z, w(p)=u \\
\end{equation}
transform wave equation \eqref{eq-w} to
\begin{equation}
 {\frac {{\rm d}^{2}}{{\rm d}{p}^{2}}}w \left( p \right) =0
\end{equation}
which has the solution
\begin{equation}
w \left( p \right) ={\it C_1}\,p+{\it C_2}.
\end{equation}
This leads to solution
\begin{equation}
u \left( t,x,y,z \right) ={\it C_1}\,z+{\it C_2}
\end{equation}
of wave equation \eqref{eq-w}. \\

\noindent \textbf{(I.b) }${\mathcal L}_{2,0} \,=\,\langle e_{2},e_7 -
e_{16},e_{14}\rangle$

The joint invariants of ${\mathcal L}_{2,0}$ are
\begin{equation}\nonumber
 -{\frac {{x}^{2}+{z}^{2}}{{y}^{2}-{{\it t}}^{2}}}, u_{{}}\sqrt {-{y}^{2}+{{\it t}}^{2}} \\
\end{equation}
so that the corresponding similarity transformations
\begin{equation}
 p=-{\frac {{x}^{2}+{z}^{2}}{{y}^{2}-{{\it t}}^{2}}}, w(p)=u_{{}}\sqrt {-{y}^{2}+{{\it t}}^{2}} \\
\end{equation}
transform wave equation \eqref{eq-w} to Jacobi ODE
\begin{equation}
 4\, \left( {\frac {{\rm d}^{2}}{{\rm d}{p}^{2}}}w \left( p \right)
 \right) {p}^{2}-4\, \left( {\frac {{\rm d}^{2}}{{\rm d}{p}^{2}}}w
 \left( p \right) \right) p+8\, \left( {\frac {\rm d}{{\rm d}p}}w
 \left( p \right) \right) p-4\,{\frac {\rm d}{{\rm d}p}}w \left( p
 \right) +w \left( p \right) =0
\end{equation}
which has the solution
\begin{equation}
w \left( p \right) ={\it C_1}\,{\it EllipticK} \left( \sqrt {p}
 \right) +{\it C_2}\,{\it EllipticCK} \left( \sqrt {p} \right)
\end{equation}
in terms of complete and complementary complete elliptic integrals
of the first kind \\
(ref:
http://www.maplesoft.com/support/help/Maple/view.aspx?path=EllipticF).
This leads to solution
\begin{equation}
u \left( {\it t},x,y,z \right) ={\frac {1}{\sqrt {-{y}^{2}+{{\it t}}
^{2}}} \left( {\it C_1}\,{\it EllipticK} \left( \sqrt {{\frac
{-{x}^{ 2}-{z}^{2}}{{y}^{2}-{{\it t}}^{2}}}} \right) +{\it
C_2}\,{\it EllipticCK} \left( \sqrt {{\frac
{-{x}^{2}-{z}^{2}}{{y}^{2}-{{\it t}} ^{2}}}} \right)\right) }
\end{equation}
of wave equation \eqref{eq-w}. \\

\noindent \textbf{(I.c) }${\mathcal L}_{3,0} \,=\, \langle e_{12} +
\frac{1}{2}e_5, e_9 + 4 e_{8},e_{15}\rangle$

The joint invariants of ${\mathcal L}_{3,0}$ are
\begin{equation}\nonumber
 {\frac {-{{\it t}}^{4}+ \left( 2\,{x}^{2}+2\,{y}^{2}+2\,{z}^{2}-8
 \right) {{\it t}}^{2}-{x}^{4}+ \left( -2\,{y}^{2}-2\,{z}^{2}
 \right) {x}^{2}-{y}^{4}-2\,{y}^{2}{z}^{2}- \left( {z}^{2}+4 \right) ^
{2}}{4\,{x}^{2}+4\,{y}^{2}}} , u \sqrt {{x}^{2}+{y}^{2}}
\end{equation}
so that the corresponding similarity transformations
\begin{equation}\label{p-3-0}
 p= {\frac {-{{\it t}}^{4}+ \left( 2\,{x}^{2}+2\,{y}^{2}+2\,{z}^{2}-8
 \right) {{\it t}}^{2}-{x}^{4}+ \left( -2\,{y}^{2}-2\,{z}^{2}
 \right) {x}^{2}-{y}^{4}-2\,{y}^{2}{z}^{2}- \left( {z}^{2}+4 \right) ^
{2}}{4\,{x}^{2}+4\,{y}^{2}}},
\end{equation}
\begin{equation}
w(p)=u \sqrt {{x}^{2}+{y}^{2}}
\end{equation}
transform wave equation \eqref{eq-w} to
\begin{equation}
 4\, \left( {\frac {{\rm d}^{2}}{{\rm d}{p}^{2}}}w \left( p \right)
 \right) {p}^{2}+8\, \left( {\frac {\rm d}{{\rm d}p}}w \left( p
 \right) \right) p-16\,{\frac {{\rm d}^{2}}{{\rm d}{p}^{2}}}w \left(
p \right) +w \left( p \right) =0
\end{equation}
which has the solution
\begin{equation}
w \left( p \right) ={\it C_1}\,{\it LegendreP} \left( -1/2,p/2
 \right) +{\it C_2}\,{\it LegendreQ} \left( -1/2,p/2 \right)
\end{equation}
in terms of Legendre functions of the first and second kind\\
 (ref: http://www.maplesoft.com/support/help/Maple/view.aspx?path=Legendre). This leads to solution
\begin{equation}
u \left( {\it t},x,y,z \right) = \frac{1}{\sqrt {{x}^{2}+{y}^{2}}}
\left( {\it C_1}\,{\it LegendreP} \left( -1/2,p/2
 \right) +{\it C_2}\,{\it LegendreQ} \left( -1/2,p/2 \right)
\right)
\end{equation}
of wave equation \eqref{eq-w} where $p$ is given by \eqref{p-3-0}. \\

\noindent \textbf{(II) }{$\dim G' =1$}

\noindent \textbf{(II.a) } ${\mathcal L}_{1,1}
\,=\,\langle e_{2},e_{7}-e_{16},e_8 + e_{10}\rangle$

The joint invariants of ${\mathcal L}_{1,1}$ are
\begin{equation}\nonumber
 {\frac {z}{x}}, u_{{}}x \\
\end{equation}
so that the corresponding similarity transformations
\begin{equation}
 p={\frac {z}{x}}, w(p)=u_{{}}x \\
\end{equation}
transform wave equation \eqref{eq-w} to
\begin{equation}
 \left( {\frac {{\rm d}^{2}}{{\rm d}{p}^{2}}}w \left( p \right)
 \right) {p}^{2}+4\, \left( {\frac {\rm d}{{\rm d}p}}w \left( p
 \right) \right) p+2\,w \left( p \right) +{\frac {{\rm d}^{2}}{
{\rm d}{p}^{2}}}w \left( p \right) =0
\end{equation}
which has the solution
\begin{equation}
w \left( p \right) ={\frac {{\it C_1}\,p+{\it C_2}}{{p}^{2}+1}}.
\end{equation}
This leads to solution
\begin{equation}
u \left( {\it t},x,y,z \right) ={\frac {{\it C_1}\,z+{\it C_2}\,x}{
{x}^{2}+{z}^{2}}}
\end{equation}
of wave equation \eqref{eq-w}. \\

\noindent \textbf{(II.b) } ${\mathcal L}_{2,1}
\,=\,\langle e_{12},-e_{6}+e_{13},-e_8 + e_{10}\rangle$

The joint invariants of ${\mathcal L}_{2,1}$ are
\begin{equation}\nonumber
x, t+y, u
\end{equation}
which gives the similarity transformation
\begin{equation}
p\,=\,x,\, q\,=\,t+y,\, w(p,q)\,=\,u_{{}}{ } \\
\end{equation}
that transforms the wave equation into
\begin{equation}
 w_{pp}\,=\,0\, ,
\end{equation}
which gives the solution
\begin{equation}
u(t,x,y,z) \,=\, xF_{1} (y+t) + F_{2} (y+t)\, .
\end{equation}

\noindent \textbf{(III) }{$\dim G' =2$}

\noindent \textbf{(III.a) } ${\mathcal L}_{1,2}
\,=\,\langle e_{7}-e_{16},e_{11}, e_{12}\rangle$

The joint invariants of ${\mathcal L}_{1,2}$ are
\begin{equation}\nonumber
 {\frac {y}{{\it t}}}, u_{{}}{\it t} \\
\end{equation}
so that the corresponding similarity transformations
\begin{equation}
 p={\frac {y}{{\it t}}}, w(p)=u_{{}}{\it t} \\
\end{equation}
transform wave equation \eqref{eq-w} to
\begin{equation}
 \left( {\frac {{\rm d}^{2}}{{\rm d}{p}^{2}}}w \left( p \right)
 \right) {p}^{2}+4\, \left( {\frac {\rm d}{{\rm d}p}}w \left( p
 \right) \right) p+2\,w \left( p \right) -{\frac {{\rm d}^{2}}{
{\rm d}{p}^{2}}}w \left( p \right) =0
\end{equation}
which has the solution
\begin{equation}
w \left( p \right) ={\frac {{\it C_1}\,p+{\it C_2}}{{p}^{2}-1}}.
\end{equation}
This leads to solution
\begin{equation}
u \left( {\it t},x,y,z \right) ={\frac {{\it C_1}\,y+{\it C_2}\,{
\it t}}{{y}^{2}-{{\it t}}^{2}}}
\end{equation}
of wave equation \eqref{eq-w}. \\

\noindent \textbf{(III.b) }\ ${\mathcal L}_{2,2}
\,=\,\langle e_{2},\, e_8+e_{10}, e_8-e_{10}\rangle$
$$
\,=\, \langle y\frac{\partial}{\partial t}+
t\frac{\partial}{\partial y},\, \frac{\partial}{\partial t}+
\frac{\partial}{\partial y},\, \frac{\partial}{\partial t}-
\frac{\partial}{\partial y}\rangle\, .$$
Clearly, joint invariants the same as the invariants of
$\langle \frac{\partial}{\partial y},\,
\frac{\partial}{\partial t}\rangle$, therefore the basic invariants are $x,\, z,\, u$.
Substituting $u\,=\, u(x,\, z)$ in the wave equation shows that $u$ must be a
harmonic function, so they are real parts of holomorphic functions in
the variable $x+\sqrt{-1}z$.\\

\noindent \textbf{(III.c) } ${\mathcal L}_{3,2} \,=\,\langle e_{14},e_{11},
e_{12}\rangle$.

The joint invariants of ${\mathcal L}_{3,2}$ are
\begin{equation}\nonumber
 t,\, y, \, u \\
\end{equation}
so that the corresponding similarity transformations
\begin{equation}
 p\,=\,t,\, q\,=\,y, \, w(p,q)\,=\,u \\
\end{equation}
transform wave equation \eqref{eq-w} to
\begin{equation}
 {\frac {\partial^{2}}{\partial {q}^{2}}}w \left( p,q \right) -{\frac
{\partial ^{2}}{\partial {p}^{2}}}w \left( p,q \right) =0
\end{equation}
which has the solution
\begin{equation}
w \left( p,q \right) ={\it F_1} \left( q+p \right) +{\it F_2}
 \left( q-p \right).
\end{equation}
This leads to solution
\begin{equation}
u \left( {\it t},x,y,z \right) ={\it F_1} \left( y+{\it t} \right)
+{\it F_2} \left( y-{\it t} \right)
\end{equation}
of wave equation \eqref{eq-w}. \\

\noindent \textbf{(III.d) } ${\mathcal L}_{4,2} \,=\,\langle e_{14},- e_6+
e_{13}, -e_4 + e_{15}\rangle$

The joint invariants of ${\mathcal L}_{4,2}$ are
\begin{equation}\nonumber
 y+{\it t}, {x}^{2}-2\,{\it t}\,y+{z}^{2}-2\,{{\it t}}^{2}, u \\
\end{equation}
so that the corresponding similarity transformations
\begin{equation}
 p\,=\,y+{\it t},\ q\,=\, {x}^{2}-2\,{\it t}\,\ y+{z}^{2}-2\,{{\it t}}^{2},\ w(p,q)\,=\,u \\
\end{equation}
transform wave equation \eqref{eq-w} to
\begin{equation}
\left( -{p}^{2}+q \right) {\frac {\partial ^{2}}{\partial {q}^{2}}}w
 \left( p,q \right) + \left( {\frac {\partial ^{2}}{\partial q
\partial p}}w \left( p,q \right)\right) p+2\,{\frac {\partial }{
\partial q}}w \left( p,q \right) =0
\end{equation}
which has the solution
\begin{equation}
w \left( p,q \right) ={\frac {1}{p} \left( {\it F_2} \left( p
 \right) p+{\it F_1} \left( {\frac {{p}^{2}+q}{p}} \right) \right) }.
\end{equation}
This leads to solution
\begin{equation}
u \left( {\it t},x,y,z \right) ={\it F_2} \left( y+{\it t} \right)
+{\frac {1}{y+{\it t}}{\it F_1} \left( {\frac {{x}^{2}+{y}^{2}+{z}^{
2}-{{\it t}}^{2}}{y+{\it t}}} \right) }
\end{equation}
of wave equation \eqref{eq-w}. \\

\noindent \textbf{(IV) }{$\dim G' =3$}

\noindent \textbf{(IV.a) } ${\mathcal L}_{1,3} \,=\,\langle e_{15},e_{14},
e_{13}\rangle$

The joint invariants of ${\mathcal L}_{1,3}$ are
\begin{equation}\nonumber
 t, \, {x}^{2}+{y}^{2}+{z}^{2},\, u \\
\end{equation}
so that the corresponding similarity transformations
\begin{equation}
 p\,=\,t, \, q\,=\, {x}^{2}+{y}^{2}+{z}^{2}, \, w(p,q)\,=\,u \\
\end{equation}
transform wave equation \eqref{eq-w} to
\begin{equation}
4\, \left( {\frac {\partial ^{2}}{\partial {q}^{2}}}w \left( p,q
 \right) \right) q+6\,{\frac {\partial }{\partial q}}w \left( p,q
 \right) -{\frac {\partial ^{2}}{\partial {p}^{2}}}w \left( p,q
 \right) =0
\end{equation}
which has the solution
\begin{equation}
w \left( p,q \right) ={\frac {{\it F_1} \left( \sqrt {q}+p \right)
+{ \it F_2} \left( -\sqrt {q}+p \right) }{\sqrt {q}}}.
\end{equation}
This leads to solution
\begin{equation}
u \left( {\it t},x,y,z \right) ={\frac {{\it F_1} \left( \sqrt {{x}^
{2}+{y}^{2}+{z}^{2}}+{\it t} \right) +{\it F_2} \left( -\sqrt {{x}^{
2}+{y}^{2}+{z}^{2}}+{\it t} \right) }{\sqrt
{{x}^{2}+{y}^{2}+{z}^{2}} }}
\end{equation}
of wave equation \eqref{eq-w}. \\

\noindent \textbf{(IV.b) } ${\mathcal L}_{2,3} \,=\,\langle e_7 -e_{16},e_{8},
e_{9}\rangle$

The joint invariants of ${\mathcal L}_{2,3}$ are
\begin{equation}\nonumber
 {\frac {y}{x}},\, {\frac {z}{x}}, \, u x \\
\end{equation}
so that the corresponding similarity transformations
\begin{equation}
 p\,=\,{\frac {y}{x}},\, q\,=\, {\frac {z}{x}},\, w(p,q)\,=\,u x\\
\end{equation}
transform wave equation \eqref{eq-w} to
\begin{equation}
\left( {\frac {\partial ^{2}}{\partial {p}^{2}}}w
 \right) {p}^{2}+2\,pq{\frac {\partial ^{2}}{\partial q\partial p}}w
 + \left( {\frac {\partial ^{2}}{\partial {q}^{2}}}
w \right) {q}^{2}+4\,p{\frac {\partial }{\partial p}}w +4\,
\left( {\frac {\partial }{\partial q}}w
 \right) q+{\frac {\partial ^{2}}{\partial {p}^{2}
}}w +2\,w +{\frac {\partial ^{2} }{\partial {q}^{2}}}w =0
\end{equation}
which has the solution
\begin{equation}
w \left( p,q \right) ={\frac {{\it C_1}\,p+{\it C_2}}{{p}^{2}+1}}+{
\frac {{\it C_3}\,q+{\it C_4}}{{q}^{2}+1}}.
\end{equation}
This leads to solution
\begin{equation}
u \left( {\it t},x,y,z \right) ={\frac {1}{x} \left( {\frac { \left(
{\it C_1}\,y+{\it C_2}\,x \right) x}{{x}^{2}+{y}^{2}}}+{\frac {
 \left( {\it C_3}\,z+{\it C_4}\,x \right) x}{{x}^{2}+{z}^{2}}}
 \right) }
\end{equation}
of wave equation \eqref{eq-w}. \\

\noindent \textbf{(IV.c) } ${\mathcal L}_{3,3}
\,=\,\langle e_1 + 2e_{10} + 2 e_{14}, -e_{3}-2 e_{11} + 2 e_{13}, -4 e_5 - 8 e_{12} - 8 e_{15}\rangle$ \\

The joint invariants of ${\mathcal L}_{3,3}$ are
\begin{equation}\nonumber
 {\frac {{x}^{2}+{y}^{2}+{z}^{2}-{{\it t}}^{2}+4}{{\it t}}} ,\, u t \\
\end{equation}
so that the corresponding similarity transformations
\begin{equation}
 p\,=\,{\frac {{x}^{2}+{y}^{2}+{z}^{2}-{{\it t}}^{2}+4}{{\it t}}}, \, w(p)\,=\, u t \\
\end{equation}
transform wave equation \eqref{eq-w} to
\begin{equation}
 \left( {\frac {{\rm d}^{2}}{{\rm d}{p}^{2}}}w \left( p \right)
 \right) {p}^{2}+4\, \left( {\frac {\rm d}{{\rm d}p}}w \left( p
 \right) \right) p+2\,w \left( p \right) +16\,{\frac {{\rm d}^{2}}{
{\rm d}{p}^{2}}}w \left( p \right) =0
\end{equation}
which has the solution
\begin{equation}
w \left( p \right) ={\frac {{\it C_1}\,p+{\it C_2}}{{p}^{2}+16}}
\end{equation}
This leads to solution
\begin{equation}
u \left( {\it t},x,y,z \right) = {\frac {-{\it C_1}\,{{\it t}}^{2}+{
\it C_2}\,{\it t}+{\it C_1}\, \left( {x}^{2}+{y}^{2}+{z}^{2}+4
 \right) }{{{\it t}}^{4}+ \left( -2\,{x}^{2}-2\,{y}^{2}-2\,{z}^{2}+8
 \right) {{\it t}}^{2}+ \left( {x}^{2}+{y}^{2}+{z}^{2}+4 \right) ^{2}
}}
\end{equation}
of wave equation \eqref{eq-w}. \\

\section*{Appendices}

\begin{center}
\textbf{Appendix 1} \\
\end{center}

\begin{table}[h]\label{T32}
 \caption{Commutator table for symmetry algebra of wave equation on Minkowski spacetime}

 \scalebox{0.7}{
\begin{tabular}{|c|c|c|c|c|c|c|c|c|c|c|c|c|c|c|c|c|}
 \hline
 \empty & $X_{1}$ & $X_{2}$ & $X_{3}$ & $X_{4}$ & $X_{5}$ & $X_{6}$ & $X_{7}$& $X_{8}$ & $X_{9}$ & $X_{10}$ & $X_{11}$ & $X_{12}$ & $X_{13}$ & $X_{14}$ & $X_{15}$ & $X_{16}$ \\
 \hline
 $X_{1}$ & 0 & $\frac{-1}{2}X_{9}$ & 0 & 0 & 0 & 0& $-X_{1}$ & $-X_{2}$ & 0 & $X_{16}-X_{7}$ & $X_{15}$ & $X_{13}$ & $-X_{5}$ & 0 & $-X_{3}$ & 0 \\
 \hline
 $X_{2}$ & \empty & 0 & 0 & $-X_{15}$ & 0 & $-X_{13}$ & 0 & $-X_{10}$ & $2X_{1}$ & $-X_{8}$ & 0 & 0 & $-X_{6}$ & 0 & $-X_{4}$ & 0 \\
 \hline
 $X_{3}$ & \empty & \empty & 0 & $\frac{-1}{2}X_{9}$ & 0 & 0 & $-X_{3}$ & $-X_{4}$ & 0 & $-X_{15}$ & $X_{16}-X_{7}$ & $X_{14}$ & 0 & $-X_{5}$ & $X_{1}$ & 0 \\
 \hline
 $X_{4}$ & \empty& \empty & \empty & 0 & 0 & $-X_{14}$ & 0 & $-X_{11}$ & $2X_{3}$ & 0 & $-X_{8}$ & 0 & 0 & $-X_{6}$ & $X_{2}$ & 0 \\
 \hline
 $X_{5}$ & \empty & \empty & \empty & \empty & 0 & $\frac{-1}{2}X_{9}$ & $-X_{5}$ & $-X_{6}$ & 0 & $-X_{13}$ & $-X_{14}$ & $X_{16}-X_{7}$ & $X_{1}$ & $X_{3}$ & 0 & 0 \\
 \hline
 $X_{6}$ & \empty & \empty & \empty & \empty & \empty & 0 & 0 & $-X_{12}$ & $2X_{5}$ & 0 & 0 & $-X_{8}$ & $X_{2}$ & $X_{4}$ & 0 & 0 \\
 \hline
$X_{7}$ & \empty & \empty & \empty & \empty & \empty & \empty & 0 & $-X_{8}$ & $X_{9}$ & $-X_{10}$ & $-X_{11}$ & $-X_{12}$ & 0 & 0 & 0 & 0 \\
 \hline
 $X_{8}$ & \empty & \empty & \empty & \empty & \empty & \empty & \empty & 0 & $2X_{7}-2X_{16}$ & 0 & 0 & 0 & 0 & 0 & 0 & 0 \\
 \hline
 $X_{9}$ & \empty & \empty & \empty & \empty & \empty & \empty & \empty & \empty &0 & $-2X_{2}$ & $-2X_{4}$ & $-2X_{6}$& 0 & 0 & 0 & 0 \\
 \hline
 $X_{10}$ & \empty & \empty & \empty & \empty & \empty & \empty & \empty & \empty & \empty & 0 & 0 & 0 & $-X_{12}$ & 0 & $-X_{11}$ & 0 \\
 \hline
 $X_{11}$ & \empty & \empty & \empty & \empty & \empty & \empty & \empty & \empty & \empty & \empty & 0 & 0 & 0 & $-X_{12}$ & $X_{10}$ & 0 \\
 \hline
 $X_{12}$ & \empty & \empty & \empty & \empty & \empty & \empty & \empty & \empty & \empty & \empty & \empty & 0 & $X_{10}$ & $X_{11}$ & 0 & 0 \\
 \hline
 $X_{13}$ & \empty & \empty & \empty & \empty & \empty & \empty & \empty & \empty & \empty & \empty & \empty & \empty & 0 & $X_{15}$ & $-X_{14}$ & 0 \\
 \hline
 $X_{14}$ & \empty & \empty & \empty & \empty & \empty & \empty & \empty & \empty & \empty & \empty & \empty & \empty & \empty & 0 & $X_{13}$ & 0 \\
 \hline
 $X_{15}$ & \empty & \empty & \empty & \empty & \empty & \empty & \empty & \empty & \empty & \empty & \empty & \empty & \empty & \empty & 0 & 0 \\
 \hline
$X_{16}$ & \empty & \empty & \empty & \empty & \empty & \empty & \empty & \empty & \empty & \empty & \empty & \empty & \empty & \empty & \empty & 0 \\
 \hline
\end{tabular}}
where $[X_{i},X_{j}]\,=\,-[X_{j},X_{i}]$.
\end{table}

\begin{center}
\textbf{Appendix 2} \\
\end{center}

\begin{table}[h]\label{T32b}
 \caption{Commutator table for $G_2$}

 \scalebox{0.7}{
\begin{tabular}{|c|c|c|c|c|c|c|c|c|c|c|c|c|c|c|}
 \hline
 \empty & $X_{1}$ & $X_{2}$ & $X_{3}$ & $X_{4}$ & $X_{5}$ & $X_{6}$ & $X_{7}$& $X_{8}$ & $X_{9}$ & $X_{10}$ & $X_{11}$ & $X_{12}$ & $X_{13}$ & $X_{14}$ \\
 \hline
 $X_{1}$ & 0 & 0 & 0 & 0 & 0 & $-X_1$& 0 & 0 & 0 & $-\frac{1}{2}X_{2}$ & $-\frac{1}{2}X_{3}$ & $-\frac{1}{2}X_{4}$ & $-\frac{1}{2}X_{5}$ & $-X_6$ \\
 \hline
 $X_{2}$ & \empty & 0 & 0 & 0 & $4X_{1}$ & $-\frac{1}{2}X_{2}$ & 0 & $\frac{3}{2}X_{2}$ & $-X_{3}$ & 0 & 0 & $-\frac{4}{3}X_7 $ & $-2X_8+2X_6$ & $-X_{10}$ \\
 \hline
 $X_{3}$ & \empty & \empty & 0 & $-4X_{1}$ & 0 & $-\frac{1}{2}X_{3}$ & $-\frac{3}{2}X_2$ & $\frac{1}{2}X_{3}$ & $-X_4$ & 0 & $\frac{4}{3}X_{7}$ & $\frac{2}{3}X_8-2X_6$ & $2X_{9}$ & $-X_{11}$ \\
 \hline
 $X_{4}$ & \empty & \empty & \empty & 0 & 0 & $-\frac{1}{2}X_{4}$ & $-2X_3$ & $-\frac{1}{2}X_{4}$ & $-X_{5}$ & $-\frac{4}{3}X_7$ & $\frac{2}{3}X_8+2X_6$ & $-\frac{8}{3}X_9$ & $-X_{12}$ & 0 \\
 \hline
 $X_{5}$ & \empty & \empty & \empty & \empty & 0 & $-\frac{1}{2}X_{5}$ & $-\frac{3}{2}X_{4}$ & $-\frac{3}{2}X_{5}$ & 0 & $-2X_{8}-2X_6$ & $2X_{9}$ & 0 & 0 & $-X_{13}$\\
 \hline
 $X_{6}$ & \empty & \empty & \empty & \empty & \empty & 0 & 0 & 0 & 0 & $-\frac{1}{2}X_{10}$ & $-\frac{1}{2}X_{11}$ & $-\frac{1}{2}X_{12}$ & $-\frac{1}{2}X_{13}$ & $-X_{14}$ \\
 \hline
$X_{7}$ & \empty & \empty & \empty & \empty & \empty & \empty & 0 & $X_{7}$ & $-X_{8}$ & 0 & $\frac{3}{2}X_{10}$ & $2X_{11}$ & $\frac{3}{2}X_{12}$ & 0 \\
 \hline
 $X_{8}$ & \empty & \empty & \empty & \empty & \empty & \empty & \empty & 0 & $X_{9}$ & $-\frac{3}{2}X_{10}$ & $-\frac{1}{2}X_{11}$ & $\frac{1}{2}X_{12}$ & $\frac{3}{2}X_{13}$ & 0 \\
 \hline
 $X_{9}$ & \empty & \empty & \empty & \empty & \empty & \empty & \empty & \empty & 0 & $X_{11}$ & $X_{12}$ & $X_{13}$& 0 & 0 \\
 \hline
 $X_{10}$ & \empty & \empty & \empty & \empty & \empty & \empty & \empty & \empty & \empty & 0 & 0 & 0 & $2X_{14}$ & 0 \\
 \hline
 $X_{11}$ & \empty & \empty & \empty & \empty & \empty & \empty & \empty & \empty & \empty & \empty & 0 & $-2X_{14}$ & 0 & 0 \\
 \hline
 $X_{12}$ & \empty & \empty & \empty & \empty & \empty & \empty & \empty & \empty & \empty & \empty & \empty & 0 & 0 & 0 \\
 \hline
 $X_{13}$ & \empty & \empty & \empty & \empty & \empty & \empty & \empty & \empty & \empty & \empty & \empty & \empty & 0 & 0 \\
 \hline
 $X_{14}$ & \empty & \empty & \empty & \empty & \empty & \empty & \empty & \empty & \empty & \empty & \empty & \empty & \empty & 0 \\
 \hline

 \hline
\end{tabular}}

where $[X_{i},X_{j}]\,=\,-[X_{j},X_{i}]$.
\end{table}

\section*{Acknowledgements}

We thank the two referees for helpful comments. One of us (HA) thanks Karl-Hermann Neeb 
for a very helpful correspondence. The third-author is supported by a J. C. Bose 
Fellowship.

\newpage

\end{document}